\newtheorem{theorem}{Theorem}
\theoremstyle{definition}
\newtheorem*{remark}{Remark}
\newcommand{\E}{\mathrm{E}}
\newcommand{\var}{\mathrm{Var}}
\newcommand{\pn}{\mathcal{P}}
\newcommand{\cov}{\mathrm{Cov}}
\newcommand{\pr}{\mathrm{P}}
\newcommand{\abs}[1]{\left\lvert#1\right\rvert}
\newcommand{\ra}{\rightarrow}
\newtheorem*{question*}{Question}
\newtheorem{corollary}[theorem]{Corollary} 
\newtheorem{example}[theorem]{Example}
\newtheorem{lemma}[theorem]{Lemma}  
\def\abrule{\vrule height11.5pt width0pt depth5.5pt}
\def\brule{\vrule height0pt width0pt depth5.5pt}
\begin{document}

\title{Matching Adjacent Cards}

\author{Kent E. Morrison}         
\address{American Institute of Mathematics,
Caltech, Pasadena, CA 91125}

\thanks{\emph{Mathematics Magazine} \textbf{97} (2024) 471--483.}

\begin{abstract}
In a well-shuffled deck of cards, what is the probability that somewhere in the 
deck there are adjacent cards of the same rank? What is the average number 
of adjacent matches? What is the probability distribution for the number of matches? 
We answer these and related questions for both the standard $52$-card deck 
with four suits and $13$ ranks and for generalized decks with $k$ suits and $n$ ranks. 
We also determine the limiting distribution as $n$ goes to infinity with $k$ fixed.
\end{abstract}
\maketitle

\large
\renewcommand{\baselinestretch}{1.2}   
\normalsize

Here is my offer. First pay a dollar to play the game. Then take a shuffled deck of 
cards and fan out the entire deck face up onto the table. For each occurrence 
of adjacent cards with the same value I will pay you a dollar. If there is one 
match, then you break even. If there are no matches, then you lose your dollar. 
If there are two or more matches, then you come out ahead. Should you accept my offer?  

Get a deck of cards and give it a try. Play at least five times, making sure 
to shuffle the deck several times before each round.  

Playing a few rounds should convince you to take the bet. 
But would you be willing to pay two dollars to play? What is the maximum 
you should consider paying? And what is the probability of losing your dollar? 
We get a better idea with some computer simulation of a hundred thousand games. 

In the appendix, you can find code in Mathematica and in Sage to do this. 
Note that three cards of the same rank occurring consecutively count as two matches, 
and all four cards of one rank occurring consecutively count as three matches.  
The maximal number of matches possible is $39$, the probability of which is 
exceedingly small---approximately $6.76672 \cdot 10^{-41}$.  
(Exercise: find the exact answer.) 

For one particular run with $100$,$000$ shuffles, the match count histogram 
is shown in Figure~\ref{fig1}. Although you cannot see this in the histogram, 
the maximal count that occurred is $13$, and that happened just once. 
The count of $11$ occurred $12$ times and the count of $12$ occurred twice. 
For this run the average was $3.00125$, and so you might be willing to pay up 
to about three dollars per game. Also, the probability of no matches appears to 
be between $4\%$ and $5\%$. The most frequent match counts are 
$2$ and $3$, each occurring more than $20\%$ of the time. 
\begin{figure}[h]
\begin{center}
\includegraphics[width=3in]{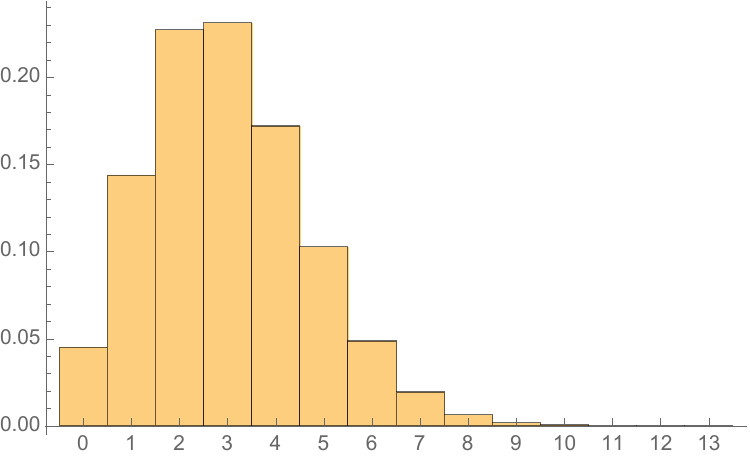}
\caption{A histogram displaying the results of playing our card-matching
game $100$,$000$ times.}
\label{fig1}
\end{center}
\end{figure}
It is fairly easy to find the expected number of matches using indicator random variables. 
Let $M_i$ be $1$ if card $i$ and card $i+1$ match and $0$ otherwise. 
Then the number of matches $M$ is the sum $M_1 +M_2 +\cdots + M_{51}$. 
Expectation is linear, so $\E(M)= \E(M_1) + \E(M_2)+\cdots + \E(M_{51})$, 
but $\E(M_i)$ is the same for all $i$. Finally, $E(M_i)=3/51$ because there are 
$51$ possibilities for card $i+1$, and three of them match card $i$. 
Therefore, $\E(M)=3$. This easily generalizes:
\begin{theorem}
In a randomly arranged deck of $kn$ cards consisting of $k$ 
suits with $n$ cards in each suit, the expected number of matches is $k-1$.
\end{theorem}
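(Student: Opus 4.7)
The plan is to mimic exactly the argument already sketched for the standard $52$-card deck, replacing the specific numbers $4$, $13$, $52$, $51$ with $k$, $n$, $kn$, $kn-1$ throughout. The key ingredients are indicator random variables and linearity of expectation; no independence is needed.

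First I would introduce, for each $i = 1, 2, \ldots, kn-1$, the indicator $M_i$ that equals $1$ if the cards in positions $i$ and $i+1$ share the same rank and $0$ otherwise. Then the total number $M$ of adjacent matches is the sum $M_1 + M_2 + \cdots + M_{kn-1}$. By linearity of expectation,
\[
\E(M) = \sum_{i=1}^{kn-1} \E(M_i),
\]
so it suffices to compute each $\E(M_i)$.

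Next I would observe that by symmetry of the uniform distribution over arrangements, $\E(M_i)$ does not depend on $i$. Conditioning on the identity of the card in position $i$, there remain $kn-1$ equally likely candidates for position $i+1$, and exactly $k-1$ of them share the rank of position $i$ (the $k$ cards of that rank, minus the one already placed). Hence
\[
\E(M_i) = \pr(M_i = 1) = \frac{k-1}{kn-1}.
\]

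Finally, substituting back yields
\[
\E(M) = (kn-1) \cdot \frac{k-1}{kn-1} = k-1,
\]
and the factor of $kn-1$ cancels cleanly. There is no real obstacle: the argument is an exact transcription of the $k=4$, $n=13$ computation, with the only point worth flagging being that the cancellation is what makes the answer depend on $k$ alone and not on $n$. It is worth noting that the theorem holds for every $n \geq 1$ (including the degenerate $n=1$ case, where $M$ is deterministically $k-1$).
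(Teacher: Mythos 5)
Your proposal is correct and follows exactly the same route as the paper's proof: indicator variables $M_i$ for each adjacent pair, the computation $\E(M_i)=(k-1)/(kn-1)$, and linearity of expectation to sum the $kn-1$ identical terms. Nothing is missing, and the extra remarks (symmetry, the $n=1$ case) are harmless elaborations.
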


\begin{proof}
For $1 \le i \le kn-1$, let $M_i$ be the indicator random variable for the event 
that cards $i$ and $i+1$ match, and let $M=\sum M_i$. Then $\E(M_i)= (k-1)/(kn-1)$ 
because there are $k-1$ cards of the same rank as card $i$ and $kn-1$ 
possibilities for the next card. Therefore,
\[ 
\E(M)=\sum_{i=1}^{kn-1}  \E(M_i) = \sum_{i=1}^{kn-1} \frac{k-1}{kn-1} = k-1.
\]
\end{proof}

Having found the expected number of matches, we move on to the general 
question of the complete probability distribution for $M$. With a standard 
$52$-card deck the number of matches is at most $39$ since each rank 
contributes at most three matches. For general $k$ and $n$, 
the number of matches is at most $kn-n$.
\begin{question*}
For a well-shuffled deck consisting of $k$ suits and $n$ ranks, 
what is the probability that there are exactly $r$ matches for $r=0,1,2,\ldots,kn-n$?
\end{question*}

We need some basic definitions. First, we define a card deck with $k$ suits and 
$n$ ranks to be the multiset of size $kn$ made up of $k$ copies of each  
integer from $1$ to $n$. For example, for $k=3$ and $n=4$ the deck is 
$\{1,1,1,2,2,2,3,3,3,4,4,4\}$. Now we no longer see different suits; 
the aces are all identical. The number of distinct permutations is no longer 
$(kn)!$ because the cards in each of the $n$ ranks can be permuted among 
themselves in $k!$ ways without changing what we see. Therefore, the number 
of permutations is $(kn)!/(k!)^n$. Letting $\alpha_r$ be the number of permutations 
with exactly $r$ matches we have
\[ 
\pr(M=r)= \frac{\alpha_r}{(kn)!/(k!)^n}=\frac{(k!)^n}{(kn)!}\alpha_r.
\]
We now focus on the problem of counting the number of permutations with 
$r$ matches for general $k$ and $n$. We will deal with $k=4$ in detail before 
tackling arbitrary values of $k$.

Counting permutations with no matches (i.e., $r=0$) has been of particular interest. 
Eriksson and Martin~\cite{Eriksson&Martin2017} named them ``Carlitz permutations'' 
and have enumerated them for $k=2,3,4$. The case with $k=2$ is similar to a linear 
variation of the relaxed version of the ``Probl\`eme des m\'enages.'' The classic 
version asks for the number of ways that $n$ couples can be seated at a circular 
table so that the men and women alternate and no one sits next to his or her partner. 
The relaxed version~\cite{Bogart&Doyle86} drops the requirement that the 
genders alternate. Now suppose that they are to be arranged in a single line rather 
than in a circle. Then the number of possible linear arrangements is 
$2^n$ times $\alpha_0$, the number of Carlitz permutations with two suits.  
(See sequences A114938 and A007060 in the  On-Line Encyclopedia of Integer 
Sequences (OEIS) at \url{https://oeis.org}.)

\section{Four suits ($k=4$)}

The deck is the multiset of size $4n$ consisting of four copies of each number 
$1$ through $n$, and the number of permutations is $(4n)!/(4!)^n$. 
Let $\alpha_r$ be the number of permutations with exactly $r$ matches and let  
\[ 
A(x)=\sum_{r=0}^{3n} \alpha_r x^r 
\] 
be the associated generating function. In order to use the principle of 
inclusion-exclusion we define another generating function $B(x)=\sum \beta_m x^m$, 
which is related to $A(x)$ by the functional equation $A(x)=B(x-1)$ and 
with the virtue that the $\beta_m$ are easier to calculate than the $\alpha_r$. 
Essentially, $\beta_m$ counts permutations that have at least $m$ matches.

One rank with four cards $x,x,x,x$ will account for up to three matches. 
We can be sure that this rank contributes at least one match by glueing two of the cards
together: $xx,x,x$. To force at least two matches, we can either glue two 
pairs together as $xx,xx$ or glue three cards together as $xxx,x$. 
To force three matches we glue all four cards together: $xxxx$. 
Now we choose a pattern for each of the $n$ ranks.  
Let $s,t,u,v,w$ denote the number of ranks having each of the five possible patterns, 
according to this scheme:
\begin{equation*}
\begin{array}{cc}
s &  x \cdot x \cdot x \cdot x \\t & xx \cdot x \cdot x  \\
u & xxx \cdot x \\v & xx \cdot xx \\w & xxxx
\end{array}
\end{equation*}
For a given choice of $s,t,u,v,w$ (such that $s+t+u+v+w=n$) the number of objects 
being permuted is no longer $4n$, but rather $4s+3t+2u+2v+w$. 
The pattern $ x \cdot x \cdot x \cdot x$ has four identical objects, 
and the patterns $xx \cdot x \cdot x $ and $ xx \cdot xx$ each have two identical objects, 
and so we divide by the redundancy factor $(4!)^s (2!)^t (2!)^v$.  
This construction gives a list of
\begin{equation}
\frac{(4s+3t + 2u+2v+w)!}{(4!)^s (2!)^t (2!)^v}  \label{number-multiperms}
\end{equation}
permutations. A permutation with these parameters has at least 
$t+2u+2v+3w$ matches. Define $\beta_m$ for $m=0,1,\ldots,3n$ 
to be the sum of the cardinalities of all the lists such that $m=t+2u+2v+3w$. Thus,
\begin{align}  \label{beta_m}
\beta_m &= \sum_{\substack{s+t+u+v+w=n \\ 
t+2u+2v+3w=m}} \binom{n}{s,t,u,v,w}
\frac{(4s+3t + 2u+2v+w)!}{(4!)^s (2!)^t (2!)^v}   \nonumber \\ 
&=\sum_{\substack{s+t+u+v+w=n \\ t+2u+2v+3w=m}} 
\binom{n}{s,t,u,v,w}\frac{(4n-m)!}{(4!)^s (2!)^t (2!)^v}. 
\end{align}

The permutations with exactly $m$ matches occur once in all of the permutations 
counted by $\beta_m$, but the permutations with more than $m$ matches 
occur many times. For $r > m$, a permutation with $r$ matches is 
generated $\binom{r}{m}$ times because each choice of $m$ of its $r$ matches 
corresponds to a different set of parameters $s,t,u,v,w$.  Therefore,
\[ 
\beta_m = \sum_{r \ge m} \binom{r}{m} \alpha_r,
\]
and 
\begin{align*}
B(x) &= \sum_{m=0}^{3n} \beta_m x^m 
= \sum_{m=0}^{3n}  \sum_{r=m}^{3n} \binom{r}{m} \alpha_r x^m \\
&= \sum_{r=0}^{3n} \alpha_r \sum_{m=0}^r \binom{r}{m} x^m 
= \sum_{r=0}^{3n} \alpha_r (x+1)^r = A(x+1).
\end{align*}
Equivalently, $A(x) = B(x-1)$,
which is the principle of inclusion-exclusion, concisely stated. Expanding
\[ 
B(x-1) = \sum_{m = 0}^{3n} \beta_m (x-1)^m 
\]
and equating coefficients gives
\begin{equation*} 
\alpha_r = \sum_{m=r}^{3n} (-1)^{m-r} \binom{m}{r} \beta_m .
\end{equation*} 
Using equation~\eqref{beta_m}, we get
\begin{equation*} 
\alpha_r =\sum_{m=r}^{3n} (-1)^{m-r} 
\binom{m}{r} \sum_{\substack{s+t+u+v+w=n \\ t+2u+2v+3w=m}} 
\binom{n}{s,t,u,v,w}\frac{(4n-m)!}{(4!)^s (2!)^t (2!)^v} .     
\end{equation*}
Since $m=t+2u+2v+3w$, and  
\[
(-1)^{m-r} = (-1)^{t+2u+2v+3w - r} = (-1)^{t + w -r },
\] 
we can eliminate $m$ from the formula for $\alpha_r$ and write it as a single sum.
\begin{theorem} \label{four-suit-matches} 
\begin{multline*}
\alpha_r =\sum_{\substack{s+t+u+v+w=n \\ t+2u+2v+3w \geq r}}  
(-1)^{t+w-r} \binom{t+2u+2v+3w}{r} \\ \times \binom{n}{s,t,u,v,w}
\frac{(4s+3t+2u+2v+w)!}{(4!)^s(2!)^t(2!)^v}.
\end{multline*}
\end{theorem}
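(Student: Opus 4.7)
The plan is to read off $\alpha_r$ from the inclusion-exclusion identity $A(x)=B(x-1)$ already established, then substitute the explicit formula \eqref{beta_m} for $\beta_m$ and collapse the resulting double sum into the single sum claimed in the theorem. All the heavy combinatorial lifting (defining the five patterns, counting the permutations they generate, and verifying the $\binom{r}{m}$ overcounting) has already been done, so the proof is really an exercise in polynomial bookkeeping.

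First I would invert the relation $A(x)=B(x-1)$ by expanding $B(x-1)=\sum_m \beta_m(x-1)^m$ with the binomial theorem and reading off the coefficient of $x^r$, which gives
\begin{equation*}
\alpha_r \;=\; \sum_{m\ge r} (-1)^{m-r}\binom{m}{r}\beta_m.
\end{equation*}
This is the standard M\"obius-style inversion attached to the identity $\beta_m=\sum_{r\ge m}\binom{r}{m}\alpha_r$, so it needs no separate justification.

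Next I would substitute formula \eqref{beta_m} for $\beta_m$, yielding a double sum: an outer sum over $m$ from $r$ to $3n$, and an inner sum over quintuples $(s,t,u,v,w)$ with $s+t+u+v+w=n$ and $t+2u+2v+3w=m$. The final move is to eliminate $m$. Since the inner constraint pins $m=t+2u+2v+3w$, I would rewrite $(-1)^{m-r}=(-1)^{t+2u+2v+3w-r}=(-1)^{t+w-r}$ (using that $2u+2v$ is even), replace $\binom{m}{r}$ by $\binom{t+2u+2v+3w}{r}$, and convert the range condition $m\ge r$ into $t+2u+2v+3w\ge r$. What remains is exactly the single sum in the theorem statement.

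I do not anticipate a genuine obstacle: every ingredient is in place, and the theorem is in essence a repackaging. The one spot I would double-check is the parity reduction $(-1)^{m-r}=(-1)^{t+w-r}$, since a sign error there would silently propagate, and the one spot I would sanity-check numerically is the small case $k=4$, $n=1$, where the formula should give $\alpha_0=0$, $\alpha_1=0$, $\alpha_2=0$, $\alpha_3=1$.
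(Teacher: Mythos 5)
Your proposal is correct and follows essentially the same route as the paper: the paper likewise derives $\alpha_r=\sum_{m\ge r}(-1)^{m-r}\binom{m}{r}\beta_m$ from $A(x)=B(x-1)$, substitutes equation~\eqref{beta_m}, and eliminates $m$ via the constraint $m=t+2u+2v+3w$ together with the parity observation $(-1)^{m-r}=(-1)^{t+w-r}$. Your numerical sanity check for $n=1$ is also consistent with the formula.
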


\begin{corollary} \label{alpha_0}
The number of permutations with no matches is 
\begin{equation*}  
\alpha_0 =  \sum_{s+t+u+v+w=n } (-1)^{t+w} \binom{n}{s,t,u,v,w} \frac{(4s+3t+2u+2v+w)!}{(4!)^s(2!)^t(2!)^v }.
\end{equation*}
\end{corollary}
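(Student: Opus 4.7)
The plan is to obtain the corollary as the immediate specialization $r=0$ of Theorem~\ref{four-suit-matches}, so almost no work is required beyond unwinding what each factor becomes when $r=0$.

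First I would recall the general formula
\[
\alpha_r =\sum_{\substack{s+t+u+v+w=n \\ t+2u+2v+3w \geq r}}
(-1)^{t+w-r} \binom{t+2u+2v+3w}{r}
\binom{n}{s,t,u,v,w}\frac{(4s+3t+2u+2v+w)!}{(4!)^s(2!)^t(2!)^v},
\]
and then set $r=0$. Three simplifications happen simultaneously: the sign $(-1)^{t+w-r}$ becomes $(-1)^{t+w}$; the binomial coefficient $\binom{t+2u+2v+3w}{0}$ equals $1$ regardless of the composition; and the inequality $t+2u+2v+3w\ge 0$ is automatically satisfied, so the summation index constraint collapses to the single equality $s+t+u+v+w=n$. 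Substituting these three observations produces precisely the expression claimed for $\alpha_0$.

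There is no real obstacle here; the only thing to verify is that no $r=0$ term is accidentally lost or double-counted when the inequality $t+2u+2v+3w\ge r$ is removed, but since the inequality is trivially satisfied by every nonnegative composition, the summation range is genuinely just $\{(s,t,u,v,w)\in\mathbb{Z}_{\ge 0}^5 : s+t+u+v+w=n\}$. Therefore the corollary follows in a single line from Theorem~\ref{four-suit-matches}.
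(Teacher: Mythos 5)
Your proposal is correct and matches the paper's own treatment: the paper states this corollary with no separate proof precisely because it is the immediate $r=0$ specialization of Theorem~\ref{four-suit-matches}, with the sign reducing to $(-1)^{t+w}$, the binomial coefficient $\binom{t+2u+2v+3w}{0}$ collapsing to $1$, and the constraint $t+2u+2v+3w\ge 0$ holding vacuously. Your extra remark that the summation range genuinely becomes all nonnegative compositions of $n$ into five parts is a sensible sanity check, but nothing further is needed.
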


\begin{remark}
The values of $\alpha_0$ are given by the sequence A321633 in the OEIS. 
For $0 \le n \le 5$ the terms are 
\[
1, \quad 0, \quad 2, \quad 1092, \quad 2265024, \quad 11804626080.
\]
\end{remark}

\begin{corollary} \label{pr_M=r}
For a deck with four suits and $n$ cards in each suit, we have
\begin{equation*}
\pr(M=r)= \frac{\alpha_r}{(4n)!/(24)^n}, \quad  r=0,1,\ldots,3n.
\end{equation*} 
\end{corollary}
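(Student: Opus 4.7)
The plan is to deduce this corollary directly from the counting already done in Theorem~\ref{four-suit-matches}, by recognizing that $\pr(M=r)$ is simply the ratio of favorable outcomes to total outcomes on the uniform sample space of distinguishable multideck arrangements. The formula $\pr(M=r) = (4!)^n \alpha_r / (4n)!$ was in fact already established in the general setup preceding Theorem~1; this corollary is essentially its specialization to $k=4$, rewritten with $4! = 24$ made explicit. So my plan is just to reassemble the two ingredients and spell out the division.

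First I would identify the denominator. As argued in the multiset discussion earlier in the paper, the multideck with four suits and $n$ ranks has exactly $(4n)!/(4!)^n = (4n)!/24^n$ distinguishable arrangements: each one is the image of exactly $(4!)^n$ of the $(4n)!$ suit-labeled orderings, one for every way to permute the four suits within each of the $n$ ranks. Next I would verify that a uniformly random shuffle of the suit-labeled deck induces the uniform distribution on these distinguishable arrangements, which is immediate because all of the equivalence classes under suit-relabeling have the same size $(4!)^n$.

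With these two facts in place, the event $\{M=r\}$ is the disjoint union of exactly $\alpha_r$ equally likely distinguishable arrangements, namely those counted by Theorem~\ref{four-suit-matches}. Therefore
\[
\pr(M=r) = \frac{\alpha_r}{(4n)!/(4!)^n} = \frac{24^n\,\alpha_r}{(4n)!},
\]
which is the claimed formula after rewriting the right-hand side as $\alpha_r/((4n)!/24^n)$.

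There is no real obstacle here: this is a bookkeeping corollary whose content lies entirely in Theorem~\ref{four-suit-matches} and in the earlier observation that the multiset sample space is uniform under suit-labeled shuffling. The only point that deserves an explicit mention is that the equivalence classes are all of the common size $(4!)^n$, since this is what makes the induced distribution on the multideck uniform and thus licenses the favorable-over-total calculation.
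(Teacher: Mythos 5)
Your proposal is correct and matches the paper's own (implicit) justification: the corollary is exactly the specialization to $k=4$ of the general formula $\pr(M=r)=\alpha_r/\bigl((kn)!/(k!)^n\bigr)$ stated before the Question, with $(4!)^n=24^n$. Your added remark that the suit-relabeling equivalence classes all have size $(4!)^n$, so the induced distribution on multiset permutations is uniform, is the one point the paper leaves unstated, and it is a worthwhile clarification rather than a deviation.
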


\begin{example} \label{exm:k=4,n=2}
For $k=4$ and $n=2$, there are $70$ permutations of $\{1,1,1,1,2,2,2,2\}$. 
Thus, $\beta_0=70$. It is easy to see that $\beta_6=2$. We will find $\beta_3$ 
and leave the rest as an exercise. To force at least three matches, the two ranks, 
denote them $x$ and $y$, fit one of three patterns
\begin{align*}    
xxxx \cdot y \cdot y \cdot y \cdot y    &\quad\quad   s=1, t=u=v=0, w=1 ,  \\
xxx \cdot x \cdot yy \cdot y \cdot y    &\quad\quad   s=0, t=1, u=1, v=0, w=0 ,\\
xx \cdot xx \cdot yy \cdot y \cdot y    &\quad\quad   s=0, t=1, u=0, v=1, w=0 .
\end{align*} 
For each pattern, the multinomial coefficient $\binom{n}{s,t,u,v,w}=2$, 
and so
\[ 
\beta_3 = 2\left(\frac{5!}{4!} +\frac{5!}{2!}+\frac{5!}{2! \,2!}\right) = 190.
\]  
After calculating the remaining $\beta_m$, you will get 
\[ 
B(x)= 70 + 210x + 270x^2 +190x^3 +  78x^4 +18x^5 + 2x^6 . 
\]     
Then
\begin{align*} 
A(x) &= 70 + 210(x-1)+ 270(x-1)^2 +190(x-1)^3 \\ 
&\qquad +  78(x-1)^4 +18(x-1)^5 + 2(x-1)^6 \\
&= 2 + 6x + 18x^2 +18x^3 + 18x^4 + 6x^5 + 2x^6.
\end{align*}  
An easy partial check is to list the six permutations with exactly one match.         
\end{example}

\section{The general case ($k$ suits)}

The patterns in which the $k$ cards of a given rank can be grouped correspond 
to partitions of $k$. As we have seen, there are five partitions for $k=4$. 
We represent a partition of $k$ by a vector $\pi=(\pi_1,\pi_2,\ldots,\pi_k)$, 
where $\pi_i$ is the number of occurrences of $i$. Therefore, $\sum_i i\pi_i =k$ 
and $0 \le \pi_i \le k$. Let $\nu(\pi) = \pi_1 + \cdots + \pi_k$ denote the number 
of parts in $\pi$, and let $\pn(k)$ be the set of partitions of $k$. 

Suppose that for each rank $1$ to $n$, we choose a partition describing 
the grouping of the cards in that rank. Let $s_\pi$ be the number of ranks 
which follow the partition $\pi$. Thus, $\sum_{\pi \in \pn(k)} s_\pi = n$. 
Treating each group as a single object means that for a rank whose cards are 
grouped according to $\pi$, there are $\nu(\pi)$ objects to be permuted. The total 
number of objects to be permuted is $\sum_\pi s_\pi \nu(\pi)$, and so the factorial 
of this is the total number of permutations. But we need to divide by factors to account 
for identical symbols, i.e., for card groups of the same size in a rank. 
Thus, for each partition $\pi$ we have a factor $\pi_i !$ repeated $s_\pi$ times.
 Therefore, with this choice of a partition for each rank, we generate a list of 
permutations of size
\[
\frac{\left(\sum\limits_{\pi \in \pn(k))}s_\pi \nu(\pi)\right)!}
{\prod_{\pi \in \pn(k)} \prod_{i=1}^k (\pi_i !)^{s_\pi}}.
\]

It may help to examine the situation with $k=4$ in order to see that this 
formula agrees with equation~(\ref{number-multiperms}). There the numbers 
$s,t,u,v,w$ are the values $s_\pi$ for 
$\pi=(4,0,0,0), (2,1,0,0)$, $(1,0,1,0),(0,2,0,0),(0,0,0,1)$.
\begin{equation*}
\begin{array}{cccc}s_\pi & \pi & \pi& \nu(\pi) \brule \\
\hline 
s &  x \cdot x \cdot x \cdot x & (4,0,0,0) & 4 \abrule\\
t & xx \cdot x \cdot x & (2,1,0,0) & 3 \brule \\
u & xxx \cdot x    &  (1,0,1,0)  &  2 \brule\\
v & xx \cdot xx & (0,2,0,0)  & 2 \brule\\
w & xxxx    & (0,0,0,1)  & 1 \\
\end{array}
\end{equation*}
The numerator is $(4s+3t+2u+2v+w)!$. The denominator is the product of 
powers of the factorials of all the entries of all the $\pi$. Many of them are 
powers of $0!$ and $1!$, which we do not write, and so we get only three 
interesting factors:  $(4!)^s (2!)^t (2!)^v$.

To define $\beta_m$, we construct permutations that have at least $m$ matches. 
Grouping the cards in a single rank according to the partition $\pi$ guarantees at 
least $\sum_i \pi_i(i~-~1)$ matches because each part of size $i$ contributes $i-1$ matches.
Let 
\[
\mu(\pi)=\sum_i \pi_i (i-1).
\] 
The number of matches for a permutation 
with type $(s_\pi)$ is at least $\sum_\pi s_\pi \mu(\pi)$. The number of ways to 
assign the patterns to suits to in order to have type $(s_\pi)$, 
where $\sum_\pi s_\pi = n$, is the multinomial coefficient $\binom{n}{(s_\pi)}$. 
Define $\beta_m$ by
\begin{equation*}
\beta_m = \sum_{ \substack{ \sum_\pi s_\pi =n\\
\sum_\pi s_\pi \mu(\pi)=m   } }
\binom{n}{(s_\pi)} \frac{(\sum_{\pi \in \pn(k)}
s_\pi \nu(\pi))!}{\prod_{\pi \in \pn(k)} \prod_{i=1}^k (\pi_i !)^{s_\pi}}.
\end{equation*}

Immediately from the definition of $\nu(\pi)$ and $\mu(\pi)$, 
we see that $\nu(\pi) + \mu(\pi) =n$, and so
\[
\sum_{\pi \in \pn(k)}s_\pi \nu(\pi) = \sum_{\pi \in \pn(k)}s_\pi (n-\mu(\pi)) 
= \sum_{\pi \in \pn(k)}s_\pi n -  \sum_{\pi \in \pn(k)}s_\pi \mu(\pi) =kn-m.
\]
Therefore,
\begin{equation*}
\beta_m = \sum_{ \substack{ \sum_\pi s_\pi =n\\
\sum_\pi s_\pi \mu(\pi)=m   } }
\binom{n}{(s_\pi)} \frac{(kn-m)!}{\prod_{\pi \in \pn(k)} 
\prod_{i=1}^k (\pi_i !)^{s_\pi}}.
\end{equation*}
Let 
\[
B(x) = \sum_{m=0}^{(k-1)n }\beta_m x^m
\quad \textrm{and} \quad 
A(x)= \sum_{r=0}^{(k-1)n } \alpha_r x^r, 
\]
where $\alpha_r$ is the number of permutations with exactly $r$ matches. 
Then $A(x)=B(x~-~1)$, and so
\begin{equation*}
\alpha_r = \sum_{m=r}^{(k-1)n} (-1)^{m-r} \binom{m}{r}\beta_m.
\end{equation*}

\section{Binomial and Poisson approximation}

The random variable $M$ is the sum of identically distributed Bernoulli random variables, 
but they are not independent. The dependence, however, is rather weak: whether 
or not cards $1$ and $2$ match does not have much influence on whether or not cards 
$9$ and $10$ match. So, we consider a related random variable, call it $M'$, 
which is the sum of $kn-1$ \emph{independent} Bernoulli random variables with 
success probability $p=(k-1)/(kn-1)$. Thus, $M$ and $M'$ have the same expected value, 
and $M'$ has a binomial distribution, which is much easier to compute:
\[   
\pr(M'=r) = \binom{kn-1}{r} p^r (1-p)^{kn-1-r}.
\]
As we will see later (Corollary~\ref{same-variance}) it turns out 
that---somewhat surprisingly---they also have the same variance. 
Can we use $M'$ as a good approximation to $M$?

There is another approximation to consider. When $n$ is large, the probability 
$p$ of a match at any single site is small, but the expected number of matches 
is $k-1$, independent of $n$. This suggests that a Poisson random variable with 
parameter $\lambda = k-1$ may also be a good approximation. 
Recall that a Poisson random variable $X$ with parameter $\lambda > 0$ 
has a distribution given by 
\[ 
\pr(X=k) = e^{-\lambda} \frac{\lambda^k}{k!}  \,\text{  for $k=0,1,2,\ldots$.} 
\]

Table~\ref{table1} shows the computer calculations for the distributions of 
$M$, $M'$, and $X$ for the standard deck where $n=13$ and $k=4$. 
For $M$, we use the formulas from Theorem~\ref{four-suit-matches} 
and Corollary~\ref{pr_M=r}. See the appendix for the code in  
Mathematica and Sage. The probabilities are rounded to the fifth decimal place. 
For $r \ge 14$ the probabilities round to $0$.
\begin{table}[htp]
\caption{$M$ for $k=4$,  $n=13$, $M' \sim \text{Binomial}(51,1/17)$, 
$X \sim \text{Poisson}(3)$.}
\begin{center}
\begin{tabular}{|cccc|}
\hline
$r$ & $\pr(M=r)$ & $\pr(M'=r)$ & $\pr(X=r)$ \abrule\\
\hline
 0 & 0.04548 & 0.04542 & 0.04979 \abrule\\
 1 & 0.14477 & 0.14477 & 0.14936 \brule\\
 2 & 0.22611 & 0.22620 & 0.22404\brule\\
 3 & 0.23085 & 0.23091 & 0.22404\brule\\
 4 & 0.17321 & 0.17319 & 0.16803\brule\\
 5 & 0.10181 & 0.10175 & 0.10082\brule\\
 6 & 0.04879 & 0.04875 & 0.05041\brule\\
 7 & 0.01959 & 0.01959 & 0.0216\brule\\
 8 & 0.00672 & 0.00673 & 0.0081\brule\\
 9 & 0.00200 & 0.00201 & 0.0027\brule\\
 10 & 0.00052 & 0.00053 & 0.00081\brule\\
 11 & 0.00012 & 0.00012 & 0.00022\brule\\
 12 & 0.00002 & 0.00003 & 0.00006\brule\\
 13 & 0.00000 & 0.00000 & 0.00001\brule\\
 \hline
\end{tabular}
\end{center}
\label{table1}
\end{table}%

For all practical purposes, the distributions of $M$ and $M'$ are the same. 
To measure how far apart they are, we use the \emph{total variation distance}
\begin{align*}
d_{TV}(M, M') &:= \frac{1}{2} \sum_{r=0}^{51} 
\abs{\pr(M=r)-\pr(M'=r)} \\ &\,= 0.000181682. 
\end{align*}                         
An equivalent formulation of the total variation distance is 
\[  
d_{TV}(M, M') = \sup_E |\pr(M \in E)-\pr(M' \in E)|, 
\]
where $E$ ranges over all events, which in this case are the subsets 
of $\{0,1,2,\ldots,51\}$. Thus, the maximal difference between the probabilities 
on any event is less than two hundredths of one percent. 

As $n$ increases, the dependence among the $M_i$ decreases, and so we expect 
that $M$ and $M'$ become ever closer. The limit distribution of $M'$ is Poisson 
with $\lambda=3$, and so the same should hold for $M$. We can prove that by 
showing that the total variation distance between the distributions of $M$ and $M'$ 
goes to $0$. To do that, we estimate the total variation distance using a result of 
Soon~\cite[Corollary 1.5]{Soon96}, which involves all the covariances $\cov(M_i,M_j)$. 
Soon's estimate, applied to our situation, says that
\begin{equation}  
d_{TV}(M, M') \le C_{k,n} \sum_{j \ne i} |\cov(M_i,M_j)| ,  \label{est:dtv} 
\end{equation}
where 
\[ 
C_{k,n}= \frac{1-p^{kn}-(1-p)^{kn}}{knp(1-p)} 
\quad \text{and} \quad p=\frac{k-1}{kn-1}.
\]
It is straightforward to show that $\lim_{n \ra \infty} C_{n,k}=1/(k-1)$.
 The covariance of $M_i$ and $M_j$ is given by  
\[  
\cov(M_i,M_j)=\E(M_i M_j)-\E(M_i)\E(M_j).
\] 
We have $\E(M_i)\E(M_j)=(k-1)^2/(kn-1)^2$. 
The calculation of $\cov(M_i M_j)$ is carried out through the following three lemmas.

\begin{lemma}
If $\abs{i-j}=1$, then 
\[ 
\E(M_i M_j) = \frac{(k-1)(k-2)}{(kn-1)(kn-2)}.
\]
\end{lemma}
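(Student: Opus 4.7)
The plan is to exploit the fact that when $|i-j|=1$, the product $M_i M_j$ is the indicator of a very concrete event about three consecutive positions, after which everything reduces to a short sequential sampling calculation.

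First I would use symmetry of the setup (the joint distribution of the deck is exchangeable in the card positions, and $|i-j|=1$ means $\{i,j\} = \{\ell, \ell+1\}$ for some $\ell$) to reduce to the case $j=i+1$. With this reduction, $M_i M_j = 1$ if and only if cards $i$, $i+1$, and $i+2$ all share a common rank, so
\[
\E(M_i M_j) = \pr(\text{cards } i, i+1, i+2 \text{ all have the same rank}).
\]

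Next I would evaluate this probability by conditioning sequentially on the cards at positions $i, i+1, i+2$. Whatever rank appears at position $i$, there are $k-1$ remaining cards of that same rank among the $kn-1$ cards still available, giving $\pr(\text{card } i+1 \text{ matches card } i) = (k-1)/(kn-1)$, as in the proof of the first theorem. Given that this match occurs, there are now $k-2$ cards of that rank left among $kn-2$ remaining cards, so the conditional probability that card $i+2$ also has this rank is $(k-2)/(kn-2)$. Multiplying the two conditional probabilities yields
\[
\E(M_i M_j) = \frac{k-1}{kn-1} \cdot \frac{k-2}{kn-2} = \frac{(k-1)(k-2)}{(kn-1)(kn-2)},
\]
which is the claimed identity.

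There is essentially no serious obstacle here; the only point that deserves a brief remark is the edge case $k=2$, where the right-hand side vanishes because $k-2=0$. This is consistent with the event being impossible: a deck with only two cards per rank cannot place three equal-rank cards in consecutive positions, so $M_i M_{i+1} \equiv 0$ and both sides are zero. I would mention this sanity check to confirm the formula is valid for all $k \ge 2$.
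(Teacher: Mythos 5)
Your proposal is correct and follows essentially the same argument as the paper: reduce by symmetry to $j=i+1$, identify $\E(M_iM_j)$ with the probability that cards $i$, $i+1$, $i+2$ share a rank, and compute it by sequential conditioning as $\frac{k-1}{kn-1}\cdot\frac{k-2}{kn-2}$. The added sanity check for $k=2$ is a nice touch but not a departure from the paper's reasoning.
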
 
\begin{proof}
We may assume $j=i+1$. Then $\E(M_i M_j)$ is the probability that cards in location 
$i$, $i+1$, and $i+2$ are the same rank. The first card can be anything. 
The probability that the next card has the same rank is $(k-1)/(kn-1)$, and then 
the probability that the third card has the same rank is $(k-2)/(kn-2)$. 
 \end{proof}
 
\begin{lemma}
If $\abs{i-j} > 1$, then
\[ 
\E(M_i M_j) = \frac{(k-1)(k-2)(k-3)+(k-1)^2(kn-k)}{(kn-1)(kn-2)(kn-3)}.
\]
\end{lemma}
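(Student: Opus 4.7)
The plan is to compute $\E(M_i M_j) = \pr(M_i = 1 \text{ and } M_j = 1)$ directly by sequential conditioning on the ranks occupying the four positions $i, i+1, j, j+1$. The hypothesis $\abs{i-j} > 1$ is what makes this clean: it guarantees that these four positions are pairwise distinct, so the two matching events do not share a card. I would split the favorable event into two disjoint sub-events according to whether the common rank of positions $i, i+1$ coincides with the common rank of positions $j, j+1$ (the ``all four equal'' case) or differs from it (the ``two distinct ranks'' case). Adding the two contributions will produce the two summands in the numerator of the claimed formula.

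For the single-rank case, I place an arbitrary card at position $i$, and then demand that positions $i+1$, $j$, and $j+1$ each hold another card of that same rank. The successive conditional probabilities telescope to $\frac{k-1}{kn-1} \cdot \frac{k-2}{kn-2} \cdot \frac{k-3}{kn-3}$, contributing the factor $(k-1)(k-2)(k-3)$ in the numerator.

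For the two-rank case, the first factor $(k-1)/(kn-1)$ is as in the previous lemma, producing the match at positions $i, i+1$. The card at position $j$ must then belong to some rank other than the first pair's rank; among the $kn-2$ cards not yet placed, exactly $kn-2-(k-2) = kn-k$ have the required property, so this probability is $(kn-k)/(kn-2)$. Finally, position $j+1$ must match position $j$, which happens with probability $(k-1)/(kn-3)$, since $k-1$ of the $kn-3$ remaining cards share the rank of position $j$. Multiplying yields the summand $(k-1)^2(kn-k)$.

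Adding the two contributions over the common denominator $(kn-1)(kn-2)(kn-3)$ gives the stated identity. There is no real obstacle beyond bookkeeping: the only place one can stumble is in the count $kn-k$ of ``cards of a rank different from the first pair's rank'' among the $kn-2$ remaining cards, which follows because all $k$ cards of the first rank must be excluded and two of them are already placed.
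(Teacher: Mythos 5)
Your proposal is correct and follows essentially the same route as the paper: the same decomposition into the ``all four cards of one rank'' and ``two distinct ranks'' configurations, with the same sequential conditional probabilities yielding $(k-1)(k-2)(k-3)$ and $(k-1)^2(kn-k)$ over the common denominator $(kn-1)(kn-2)(kn-3)$. Your explicit remark that $\abs{i-j}>1$ makes the four positions pairwise distinct, and your derivation of the count $kn-2-(k-2)=kn-k$, simply spell out details the paper leaves implicit.
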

\begin{proof}
In this case, the locations of the matches are separated. 
There are two possible configurations, namely
\[  
\ldots xx \ldots xx\ldots \quad \mbox{or} \quad  \ldots xx \ldots yy \ldots 
\] 
In the first configuration, there are four cards of the same rank; 
in the second the matches involve cards of different ranks. 
The probability of the first is 
\[ 
\frac{(k-1)(k-2)(k-3)}{(kn-1)(kn-2)(kn-3)}.
\]
Note that this configuration cannot occur for $k \le 3$, and the formula correctly 
gives $0$ in that case. For the second configuration, the probability is
\[ 
\frac{(k-1)^2(kn-k)}{(kn-1)(kn-2)(kn-3)} 
\] 
because $(k-1)/(kn-1)$ is the probability of the $xx$ match. 
Then $y$ must be one of $kn-k$ cards of the remaining $kn-2$ and the second 
$y$ must be one of $k-1$ cards of the remaining $kn-3$. The sum of the probabilities 
for the two configurations is $\E(M_i M_j)$.
\end{proof}

\begin{lemma}
\[ 
\cov(M_i,M_j) = 
\begin{cases} 
\displaystyle \frac{(k-1)(k-kn)}{(kn-1)^2(kn-2)}  &\text{if $|i-j|=1$} ,\\[2em]
\displaystyle \frac{2(k-1)(kn-k)}{(kn-1)^2(kn-2)(kn-3)} &\text{if $|i-j| >1$} .
\end{cases}
\]
\end{lemma}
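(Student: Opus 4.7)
The plan is to apply the identity $\cov(M_i,M_j) = \E(M_i M_j) - \E(M_i)\E(M_j)$ and substitute the values computed in the two preceding lemmas, together with the already-recorded product $\E(M_i)\E(M_j) = (k-1)^2/(kn-1)^2$. So the whole proof reduces to algebraic simplification in two cases. To keep expressions manageable I would write $N = kn$ throughout the scratch work and restore $kn$ at the end.

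For the case $|i-j|=1$, the plan is to put everything over the common denominator $(N-1)^2(N-2)$ and factor $(k-1)$ out in front:
\[
\cov(M_i,M_j) = \frac{(k-1)\bigl[(k-2)(N-1) - (k-1)(N-2)\bigr]}{(N-1)^2(N-2)}.
\]
Expanding the bracket gives $(k-2)(N-1) - (k-1)(N-2) = k - N$, which yields the stated formula $(k-1)(k-kn)/((kn-1)^2(kn-2))$. This case is essentially a one-line calculation.

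For the case $|i-j|>1$, the plan is the same but with common denominator $(N-1)^2(N-2)(N-3)$. Factoring $(k-1)$ out of the numerator leaves the expression
\[
(N-1)(k-2)(k-3) + (k-1)\bigl[(N-1)(N-k) - (N-2)(N-3)\bigr].
\]
The inner bracket simplifies to $(4-k)N + (k-6)$, and the remaining algebra should collapse to $2(N-k)$ after distributing $(k-1)$ and combining with $(N-1)(k-2)(k-3)$. The main obstacle—and the only real work in the lemma—is verifying this collapse cleanly; to keep it honest I would expand both halves fully in terms of $N$ and $k$, collect powers of $N$, and check that all higher-order terms cancel, leaving the asserted numerator $2(k-1)(N-k)$.

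Once both cases are verified, the claimed piecewise formula follows immediately, and no further observation is needed. The calculation is purely mechanical, so I expect no conceptual obstacle, only the bookkeeping in the second case.
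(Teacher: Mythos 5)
Your proposal is correct and is precisely the computation the paper has in mind --- the paper's proof reads ``Routine algebra left to the reader,'' and your substitution of the two preceding lemmas into $\cov(M_i,M_j)=\E(M_iM_j)-\E(M_i)\E(M_j)$ followed by case-by-case simplification is that routine algebra, carried out correctly (the bracket $(k-2)(N-1)-(k-1)(N-2)=k-N$ in the first case, and the collapse to $2(k-1)(N-k)$ in the second, both check out). No gap; you have simply supplied the details the paper omits.
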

\begin{proof}
Routine algebra left to the reader.
\end{proof}
\noindent
Notice that the covariance is negative when $|i-j|=1$ and positive when $|i-j|>1$. 

\begin{theorem}
The total variation distance between the distributions of 
$M$ and $M'$ goes to $0$ as $n \ra \infty$. 
\end{theorem}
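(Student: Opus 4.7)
The plan is to plug the explicit covariances from the three preceding lemmas into the Soon estimate~\eqref{est:dtv} and show that the resulting bound is $O(1/n)$. Since the indices of $M_i$ range over $\{1, 2, \ldots, kn-1\}$, I split the double sum $\sum_{j \ne i} |\cov(M_i, M_j)|$ into two pieces according to whether $|i-j|=1$ or $|i-j|>1$.

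First, I count the ordered pairs $(i,j)$ with $i \ne j$: there are $2(kn-2)$ pairs with $|i-j|=1$ and $(kn-2)(kn-3)$ pairs with $|i-j|>1$. Next, I read off the magnitudes of the covariances from the last lemma. For $|i-j|=1$, the covariance has magnitude $(k-1)(kn-k)/[(kn-1)^2(kn-2)]$, which is $O(1/n^2)$ for fixed $k$; for $|i-j|>1$, the magnitude is $2(k-1)(kn-k)/[(kn-1)^2(kn-2)(kn-3)]$, which is $O(1/n^3)$.

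Multiplying count by magnitude, each subsum is $O(1/n)$: the adjacent-pair contribution is $2(kn-2) \cdot O(1/n^2) = O(1/n)$, and the separated-pair contribution is $(kn-2)(kn-3) \cdot O(1/n^3) = O(1/n)$. Therefore $\sum_{j \ne i} |\cov(M_i, M_j)| = O(1/n)$. Combined with the fact (stated in the text) that $C_{k,n} \to 1/(k-1)$, which in particular means $C_{k,n}$ is bounded in $n$, the Soon estimate yields $d_{TV}(M, M') = O(1/n) \to 0$ as $n \to \infty$.

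There is no real obstacle here; the argument is a direct counting-plus-estimation step. The only thing worth emphasizing is the cancellation: although there are $\Theta(n^2)$ pairs with $|i-j|>1$, their covariances decay one order faster than the adjacent ones, so both subsums contribute at the same $O(1/n)$ rate. If a more quantitative statement were desired, one could make all the implicit constants explicit in terms of $k$ to obtain a concrete rate of convergence, but the theorem as stated requires only that the bound vanish.
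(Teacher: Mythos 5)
Your proof is correct and follows essentially the same route as the paper: the same split into $|i-j|=1$ and $|i-j|>1$, the same pair counts $2(kn-2)$ and $(kn-2)(kn-3)$, the same covariance magnitudes from the lemma, and the same boundedness of $C_{k,n}$. The only cosmetic difference is that you phrase the two subsums as $O(1/n)$ estimates, whereas the paper evaluates each exactly as $2(k-1)(kn-k)/(kn-1)^2$ before taking the limit.
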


\begin{proof}
In the estimate given in equation~\eqref{est:dtv}, 
the constant $C_{k,n}$ has a limit and so it is bounded. 
We need to show that 
$\sum_{i \ne j} |\cov(M_i,M_j)| \ra 0$
as $n \ra \infty$. Break the sum into two pieces. 
There are $2(kn-2)$ terms with $|i-j|=1$ and the covariances are negative.  From the previous lemma we get
\begin{align*}
\sum_{|i-j|=1}| \cov(M_i,M_j)| 
&=- 2(kn-2) \frac{(k-1)(k-kn)}{(kn-1)^2(kn-2)} \\
&= \frac{2(k-1)(kn-k)}{(kn-1)^2}   .  
\end{align*}  
There are $(kn-2)(kn-3)$ terms with $|i-j|>1$, and the covariances are positive. Again from the lemma we get
\begin{align*}   
\sum_{|i-j|>1} |\cov(M_i,M_j) | 
&=(kn-2)(kn-3) \frac{(k-1)(2kn-2k)}{(kn-1)^2(kn-2)(kn-3)}\\
&= \frac{2(k-1)(kn-k)}{(kn-1)^2}.
\end{align*}

It follows that
\[ 
\lim_{n \ra \infty}\sum_{i \ne j} |\cov(M_i,M_j)|=
\lim_{n \ra \infty}\frac{4(k-1)(kn-k)}{(kn-1)^2}=0.
\]
\end{proof}
\begin{corollary}
As $n \ra \infty$ the distribution of $M$ converges to a Poisson distribution 
with parameter $\lambda=k-1$. That is, for $r \ge 0$,
\[ 
\lim_{n \ra \infty} \pr(M=r) = e^{k-1}\frac{(k-1)^r}{r!}. 
\]
\end{corollary}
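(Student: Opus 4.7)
The plan is to combine the total variation convergence just established (namely $d_{TV}(M, M') \to 0$) with the classical Poisson limit theorem applied to $M'$. Since $M' \sim \text{Binomial}(kn-1,\,p)$ with $p = (k-1)/(kn-1)$, the number of trials tends to infinity while the per-trial success probability tends to $0$ in such a way that $(kn-1)p = k-1$ is held constant. This is exactly the regime in which a binomial distribution converges to a Poisson distribution with parameter $\lambda = k-1$.

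To make this quantitative, I would invoke Le Cam's inequality, which states that if $X \sim \text{Poisson}(k-1)$ (matching the mean of $M'$), then
\[
d_{TV}(M', X) \;\le\; (kn-1)\,p^2 \;=\; \frac{(k-1)^2}{kn-1}.
\]
The right-hand side clearly tends to $0$ as $n \ra \infty$. (If one prefers to avoid citing Le Cam, the same bound follows by the standard coupling of Bernoulli$(p)$ with Poisson$(p)$ random variables and subadditivity of total variation distance under independent sums.)

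Next I would apply the triangle inequality for total variation distance:
\[
d_{TV}(M, X) \;\le\; d_{TV}(M, M') + d_{TV}(M', X).
\]
The first term goes to $0$ by the preceding theorem, and the second term goes to $0$ by the Le Cam bound above. Hence $d_{TV}(M, X) \ra 0$.

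Finally, convergence in total variation distance is equivalent to the pointwise convergence of the probability mass functions on the non-negative integers. In particular, for each fixed $r \ge 0$,
\[
\abs{\pr(M=r) - \pr(X=r)} \;\le\; 2\, d_{TV}(M, X) \;\longrightarrow\; 0,
\]
which yields $\lim_{n \ra \infty} \pr(M=r) = e^{-(k-1)}(k-1)^r/r!$, as desired. No step presents a real obstacle; the main content was already packed into the preceding theorem, and what remains is simply to stitch it together with the elementary Poisson approximation to the binomial.
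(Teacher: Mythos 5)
Your proof is correct and takes essentially the same route as the paper: both pass through $M'$, using the preceding theorem to control the distance from $M$ to $M'$ and a binomial-to-Poisson approximation for the second leg. The only difference is that where the paper cites the classical qualitative limit theorem (binomial with $Np_N \ra \lambda$ converges to Poisson$(\lambda)$) and concludes pointwise, you invoke Le Cam's inequality, which makes the whole argument quantitative in total variation and yields an explicit rate $d_{TV}(M',X) \le (k-1)^2/(kn-1)$ --- a modest strengthening rather than a different idea. One small point in your favor: the paper's statement of the corollary contains a sign typo ($e^{k-1}$ should be $e^{-(k-1)}$), and your final formula has the correct exponent.
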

\begin{proof}
From the theorem, we know that for  
$r \ge 0$, we have 
\[ 
\lim_{n \ra \infty} |\pr(M=r)-\pr(M'=r) |=0. 
\]
Thus, $\pr(M=r)$ and $\pr(M'=r)$ have the same limit if the limit exists. 
In fact, the limit does exist because the distribution of $M'$ converges to the 
Poisson distribution with parameter $k-1$. This is a classical result which says 
that for a sequence $p_N>0$ such that $Np_N \ra \lambda$, the binomial 
distribution with parameters $N$ and $p_N$ converges to the Poisson distribution 
with parameter $\lambda$. This means that for each $r \ge 0$,
\[  
\lim_{N \ra \infty} \binom{N}{r}p_N^r (1-p_N)^{N-r} = 
e^{-\lambda}\frac{\lambda^r}{r!}.
\]
For $M'$ we have $N=kn-1$, $p_N=(k-1)/(kn-1)$, and $\lambda =k-1$.
\end{proof}

We have noted the curious fact that $M$ and $M'$ have the same variance. 
A more intuitive explanation would be of interest, but our proof is simply the 
computation that depends on the fact that
$\sum_{|i-j|=1}\cov(M_i,M_j) = - \sum_{|i-j|>1}\cov(M_i,M_j)$.

\begin{corollary} \label{same-variance}
\[ 
\var(M) = \var(M') =\frac{k(k-1)(n-1)}{kn-1}. 
\]
\end{corollary}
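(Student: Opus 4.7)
The plan is to compute the two variances separately and observe that they match. For $M'$, which is binomial with parameters $N=kn-1$ and $p=(k-1)/(kn-1)$, the variance is
\[
\var(M') = Np(1-p) = (kn-1)\cdot\frac{k-1}{kn-1}\cdot\frac{kn-k}{kn-1} = \frac{(k-1)(kn-k)}{kn-1} = \frac{k(k-1)(n-1)}{kn-1},
\]
which already gives the claimed closed form.

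For $M=\sum_{i=1}^{kn-1} M_i$, I would expand the variance as
\[
\var(M) = \sum_{i=1}^{kn-1} \var(M_i) + \sum_{i\ne j} \cov(M_i,M_j).
\]
Each $M_i$ is Bernoulli with success probability $p=(k-1)/(kn-1)$, so $\var(M_i)=p(1-p)$ and the diagonal sum equals $(kn-1)p(1-p)$, which is exactly $\var(M')$. Thus it suffices to show that the off-diagonal covariances cancel, i.e., $\sum_{i\ne j}\cov(M_i,M_j)=0$.

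The key step is to invoke the two sums already computed in the proof of the previous theorem. There I showed that
\[
\sum_{|i-j|=1} |\cov(M_i,M_j)| = \frac{2(k-1)(kn-k)}{(kn-1)^2} = \sum_{|i-j|>1} |\cov(M_i,M_j)|,
\]
and that adjacent covariances are negative while non-adjacent covariances are positive. Therefore the two signed sums are exact negatives of one another, and their total is zero. Combining this with the diagonal calculation gives $\var(M)=\var(M')$, completing the proof.

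The only nontrivial content is the coincidence of the two absolute sums, which was not designed for this purpose but fell out of the total-variation estimate; no further algebra is needed here, so there is no real obstacle beyond quoting those two computations. (As noted in the surrounding discussion, a conceptual explanation for why the adjacent and non-adjacent covariance sums have equal magnitude would be more satisfying, but the algebraic cancellation is enough to establish the identity.)
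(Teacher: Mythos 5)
Your proof is correct and follows essentially the same route as the paper: expand $\var(M)$ into the diagonal sum plus covariances, use the fact (established in the total-variation theorem's computations) that the adjacent covariance sum and the non-adjacent covariance sum have equal magnitude and opposite signs so that all covariances cancel, and identify what remains with $\var(M') = (kn-1)p(1-p)$. The only cosmetic difference is that you compute $\var(M')$ up front from the binomial formula rather than at the end; incidentally, your count of $kn-1$ Bernoulli terms is the correct one (the paper's written factor $(kn-n)$ is a typo for $kn-1$).
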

\begin{proof}
\begin{align*}
\var(M) &= \var(\sum_i M_i) 
= \sum_i \var(M_i) + \sum_{i \ne j} \cov(M_i,M_j) \\
&= \sum_i \var(M_i) + \sum_{|i-j|=1} 
\cov(M_i,M_j) + \sum_{|i-j|>1} \cov(M_i,M_j) = \sum_i \var(M_i).                
\end{align*}
Since $M'$ is the sum of \emph{independent} Bernoulli random variables with 
the same distribution as the $M_i$, we also have $\var(M')=\sum_i \var(M_i)$.
The $M_i$ are Bernoulli random variables with parameter $p=(k-1)/(kn-1)$ 
and variance $p(1-p)$. Thus, 
\begin{align*}
\sum_i \var(M_i) &= \sum_i p(1-p) = (kn-n) p(1-p)\\ 
&= \frac{k(k-1)(n-1)}{kn-1} .
\end{align*}
\end{proof}

\section{Two Ranks ($n=2$)}

For $n$ even moderately large, the binomial approximation is quite good, 
but it is not so good for small values of $n$, especially for $n=2$. In that case, 
however, counting permutations according to the number of matches is quite 
tractable for general $k$, and it does not use the general formula developed in 
section 5, which becomes unwieldy as $k$ increases.

For $n=2$ and $k \ge 1$, the number of permutations is $\binom{2k}{k}$. 
If a permutation begins with $1$, then it is made up of a string of $1$'s, followed 
by a string $2$'s, then a string of $1$'s, and so on. 

Suppose $s$ is even, say $s=2t$.  Then the permutation is made up of $t$ strings 
of $1$'s and $t$ strings of $2$'s that are interlaced, and so it is completely 
determined by two increasing sequences
\[   
1 \le u_1 < u_2 < \cdots < u_{t-1} \le k-1, 
\quad 1 \le v_1 < v_2 < \cdots < v_{t-1} \le k -1
\]
where $u_i$ is the total number of $1$'s that are in the first $i$ strings of $1$'s 
and $v_i$ is the total number of $2$'s in the first $i$ strings of $2$'s. For example, 
if $k=6$ and the permutation is $112111222122$, then $t=3$, $u_1=2, u_2=5$ 
and $v_1=1,v_2=4$.  The number of permutations is $\binom{k-1}{t-1}^2$.

Suppose $s$ is odd, say $s=2t+1$. Then there are $t+1$ strings of $1$'s and $t$ 
strings of $2$'s so that the permutation is determined by sequences
\[  
1 \le u_1 < u_2 < \cdots < u_{t} \le k-1, 
\quad 1 \le v_1 < v_2 < \cdots < v_{t-1} \le k-1.
\]
Therefore, the number of these permutations is $\binom{k-1}{t}\binom{k-1}{t-1}$.

\begin{theorem}  \label{formula:n=2}
If $n=2$, $k \ge 1$, and $0 \le r \le 2k-2$, then $\alpha_r$, the number of 
permutations with exactly $r$ matches, is given by
\[ 
\alpha_r = 
\begin{cases}
2 \displaystyle \binom{k-1}{k-\ell - 1}^2 &\text{if $r=2\ell$}, \\[2em]
2 \displaystyle \binom{k-1}{k-\ell-1} \binom{k-1}{k-\ell-2} &\text{if $r=2\ell +1$}.
\end{cases}
\]
\end{theorem}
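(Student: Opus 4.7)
The plan is to combine the run-decomposition described in the two paragraphs preceding the theorem with the observation that, for a permutation of $k$ ones and $k$ twos, the number of matches is rigidly determined by the number of maximal runs. If such a permutation has $s$ runs of constant rank, each run of length $j$ contributes exactly $j-1$ matches and no match crosses a run boundary (since adjacent runs carry different ranks), so the total match count is $r = 2k - s$. This single identity converts the question about $r$ into a question about $s$, which has already been answered.

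I would then split by parity. When $r = 2\ell$ is even, we get $s = 2(k-\ell)$, corresponding to the case $s=2t$ in the preceding discussion with $t = k-\ell$; the count already derived, $\binom{k-1}{t-1}^2$, specializes to $\binom{k-1}{k-\ell-1}^2$ permutations starting with $1$. The symmetry swapping the symbols $1$ and $2$ is a bijection between permutations starting with $1$ and those starting with $2$, so we double to get $2\binom{k-1}{k-\ell-1}^2$. When $r = 2\ell + 1$ is odd, we get $s = 2(k-\ell-1) + 1$, so $t = k-\ell-1$, and the preceding count $\binom{k-1}{t}\binom{k-1}{t-1}$ becomes $\binom{k-1}{k-\ell-1}\binom{k-1}{k-\ell-2}$; doubling gives the claimed formula.

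There is no real obstacle beyond noticing the relation $r = 2k-s$ and translating parameters. As a sanity check I would verify the boundary behaviour. At $r = 2k-2$ (so $\ell = k-1$) the formula gives $2\binom{k-1}{0}^2 = 2$, accounting for the permutations $1\cdots1\,2\cdots2$ and $2\cdots2\,1\cdots1$. At $r = 2k-1$ the formula gives $2\binom{k-1}{0}\binom{k-1}{-1} = 0$, correctly reflecting that $2k-1$ matches is impossible for a deck of size $2k$ with two distinct ranks. These checks confirm that the binomial coefficients with out-of-range lower arguments evaluate to zero in precisely the way the combinatorics demands, so a single unified formula for each parity class suffices without separate endpoint case analysis.
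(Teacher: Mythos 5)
Your proposal is correct and takes essentially the same approach as the paper: both use the run/string decomposition to establish $r = 2k - s$, translate by parity into the counts $\binom{k-1}{t-1}^2$ and $\binom{k-1}{t}\binom{k-1}{t-1}$ already derived for permutations starting with $1$, and double via the symmetry exchanging the two ranks. One minor note: in the even case your translation $t = k - \ell$ is the right one (the paper's prose states $t = k - \ell - 1$, an apparent typo, though its final binomial coefficients agree with yours).
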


\begin{proof}
Above we have counted the permutations beginning with $1$ and made up of $s$ strings. 
In a deck of $2k$ cards there are $2k-1$ locations where there could be a match, 
but each boundary between two strings reduces the number of matches by one. 
There are $s-1$ boundaries between strings. Therefore, $r=2k-1-(s-1)=2k-s$. 
If $r$ is even, say $r=2\ell$, then $s$ is even since $r=2k-s$. 
Letting $s=2t$ we see that $t=k-\ell-1$. Thus, the number of permutations beginning 
with $1$ and having $r$ matches is
\[ 
\binom{k-1}{t-1}^2 = \binom{k-1}{k-\ell - 1}^2.
\]
If $r$ is odd, say $r=2\ell+1$, then $s$ is odd, say $s=2t+1$, and $t=k-\ell-1$. 
The number of permutations beginning with $1$ and having $r$ matches is
\[
\binom{k-1}{t}\binom{k-1}{t-1}=\binom{k-1}{k-\ell-1} \binom{k-1}{k-\ell-2}. 
\]
Now we double these numbers because there are just as 
many permutations that start with $2$.
\end{proof}

You may have noticed in Example~\ref{exm:k=4,n=2} that the generating function 
$A(x)$ is palindromic. That is, $\alpha_r = \alpha_{2k-2-r}$. That can be proved in 
general for any $k$ as long as $n=2$ either from the formula in 
Proposition~\ref{formula:n=2} or from the correspondence between permutations 
and the sequences $(u_i)$ and $(v_i)$. The second approach replaces each sequence 
by its complement and has the virtue of giving a bijection between permutations with 
$r$ matches and those with $2k-2-r$ matches. The bijection is not at all obvious. 
In the case that $r=2k-2-r$, the bijection is not the identity map. 
The palindromic property does not continue for $n \ge 3$. In particular, 
although $\alpha_{kn-n}=n!$, you can see that $\alpha_0$, the number of 
permutations with no matches, is larger simply by constructing enough of them.

The numbers $\alpha_r$ for $k\ge 1$ and $0\le r \le 2k-2$ 
appear as entry A152659 in the OEIS, but there they are described as the number 
of lattice paths from $(0,0)$ to $(n,n)$ with steps $(1,0)$ and $(0,1)$ 
and having $k$ turns. To see the correspondence between permutations and paths, 
you must replace $n$ with $k$ and replace $k$ with $2k-r-1$.

\quad

\section*{Appendix: Mathematica and Sage}
The Mathematica code below samples $100$,$000$ random deals and counts the 
number of matches in each deal. Then it finds the average and plots a histogram. 
The deck is the multiset with four copies of each number from $1$ to $13$.

\begin{verbatim}
deck = Flatten[Table[{i, i, i, i}, {i, 1, 13}]];
matches = {};
Do[s = RandomSample[deck, 52];
   AppendTo[matches, Count[Differences[s], 0]], {100000}];
Mean[matches]
Histogram[matches, {1}, "Probability", 
          Ticks->{Range[0,13],Automatic}]
\end{verbatim}

\quad

Mathematica code for $\beta_m$ and $\alpha_r$ with $n$ ranks and four suits. 
\begin{verbatim}
Needs["Combinatorica`"]
beta[n_, m_] := 
  Module[{z=Select[Compositions[n,5], 
                      #[[2]]+2#[[3]]+2#[[4]]+3#[[5]]==m&]},
   Sum[Multinomial[z[[i,1]],z[[i,2]],z[[i,3]],z[[i,4]],z[[i,5]]] 
      ({4,3,2,2,1}.z[[i]])!/(24^z[[i,1]]*2^(z[[i,2]]+z[[i,4]])), 
      {i,1,Length[z]}]]              
alpha[n_, r_] :=  Sum[(-1)^m beta[n, m], {m, r, 3n}] 
\end{verbatim}  

\quad 

Sage code for the random samples and histogram.

\begin{verbatim}
def differences(x):
    return [x[i+1]-x[i] for i in range(len(x)-1)]
deck = 4*[1..13]    
matches = []
for i in range(100000):
    x = Permutations(deck).random_element()
    matches.append(differences(x).count(0))   
import numpy
import matplotlib.pyplot as plt
plt.figure(figsize=(7.5, 5))
plt.hist(matches,[i-.5 for i in range(13)], density=True,\
            edgecolor="white")
plt.xticks(range(14))
plt.show()
\end{verbatim}
 
 \quad 
 
Sage code for $\beta_m$ and $\alpha_r$ with $n$ ranks and four suits.
\begin{verbatim}
def beta(n,m):
    cmp=Compositions(n,length=5,min_part=0)
    z=[c for c in cmp if c[1]+2*c[2]+2*c[3]+3*c[4]==m]
    return sum(multinomial(list(x))
               *factorial(4*x[0]+3*x[1]+2*x[2]+2*x[3]+x[4]) 
               /((24)^x[0]*2^x[1]*2^x[3]) for x in z)
def alpha(n,r):
    return sum((-1)^(m-r)*binomial(m,r)*beta(n,m)\
      for m in range(r,3*n+1))
\end{verbatim}

\;

\noindent \textbf{Acknowledgments} \quad Thanks to David Farmer who asked about the probability of no matches, telling me that as a kid when he had nothing to do he would deal out the deck and---as far as he could remember---never fail to get at least one match.


\end{document}